\newtheorem{theorem}{Theorem}[section]
\newtheorem{lemma}[theorem]{Lemma}
\theoremstyle{definition}
\theoremstyle{remark}
\numberwithin{equation}{section}
\newcommand{\mmod}[1]{\,\,(\text{mod}\,\,#1)}
\def\bfx{{\mathbf x}}
\def\bfy{{\mathbf y}}
\def\calM{{\mathcal M}}
\def\calZ{{\mathcal Z}}
\def\ftil{\widetilde{f}}
\def\dbC{{\mathbb C}}\def\dbN{{\mathbb N}}
\def\dbR{{\mathbb R}}
\def\dbZ{{\mathbb Z}}
\def\grM{{\mathfrak M}}\def\grN{{\mathfrak N}}
\def\grS{{\mathfrak S}}\def\grP{{\mathfrak P}}
\def\grp{{\mathfrak p}}
 \def\grX{{\mathfrak X}}
\def\alp{{\alpha}} 
\def\bet{{\beta}}  
\def\gam{{\gamma}}  \def\Gam{{\Gamma}}
\def\del{{\delta}}
\def\d{{\partial}}
\def\eps{\varepsilon}
\def\le{\leqslant} \def\ge{\geqslant}
\def\d{{\,{\rm d}}}
\begin{document}
\title[Waring's problem]{On Waring's problem: two squares,\\ two cubes and two sixth powers}
\author[T. D. Wooley]{Trevor D. Wooley}
\address{School of Mathematics, University of Bristol, University Walk, Clifton, Bristol BS8 1TW, United Kingdom}
\email{matdw@bristol.ac.uk}
\subjclass[2010]{11P05, 11D85, 11P55}
\keywords{Waring's problem, Hardy-Littlewood method}
\date{}
\begin{abstract} We investigate the number of representations of a large positive integer as the sum of two squares, two positive integral cubes, and two sixth powers, showing that the anticipated asymptotic formula fails 
for at most $O((\log X)^{3+\eps})$ positive integers not exceeding $X$.\end{abstract}
\maketitle

\section{Introduction} The Hardy-Littlewood (circle) method fails to establish the solubility of problems of Waring-type when the sum of the reciprocals of the exponents does not exceed $2$. This well-known consequence of the convexity barrier has been circumvented in very few cases by other devices. A problem that 
fails to be accessible to the circle method by the narrowest of margins is the notorious one of representing integers as sums of two squares, two positive integral cubes, and two sixth powers. The author has recently applied Golubeva's method \cite{Gol2008} to show, subject to the truth of the Generalised Riemann Hypothesis (GRH), that all large integers are thus represented (see \cite[Theorem 1.1]{Woo2012}). However, this work employs representations of special type, and fails to deliver the anticipated asymptotic formula for their total number. The purpose of 
this note is to show that, although the expected asymptotic formula may occasionally fail to hold, the set of such exceptional instances is extraordinarily sparse.\par

Let $R(n)$ denote the number of representations of the integer $n$ in the shape
\begin{equation}\label{1.1}
x_1^2+x_2^2+x_3^3+x_4^3+x_5^6+x_6^6=n,
\end{equation}
with $x_i\in \dbN$ $(1\le i\le 6)$. Also, let
\begin{equation}\label{1.2}
\grS(n)=\sum_{q=1}^\infty \sum_{\substack{a=1\\ (a,q)=1}}^q q^{-6}S_2(q,a)^2S_3(q,a)^2S_6(q,a)^2e(-na/q),
\end{equation}
in which we write
$$S_k(q,a)=\sum_{r=1}^qe(ar^k/q),$$
and as usual $e(z)$ denotes $e^{2\pi i z}$. We refer to a function $\psi(t)$ as being a {\it sedately increasing function} when $\psi(t)$ is a function of a positive variable $t$, increasing monotonically to infinity, and satisfying the condition that when $t$ is large, one has $\psi(t)=O(t^\del)$ for a positive number $\del$ sufficiently small in the ambient context. Finally, write $E(X;\psi)$ for the number of integers $n$ with $1\le n\le X$ such that
\begin{equation}\label{1.3}
\Bigl|R(n)-\tfrac{27}{32}{\sqrt[3]{2}}\Gam(\tfrac{4}{3})^6\grS(n)n\Bigr|>n\psi(n)^{-1}.
\end{equation}
It is worth remarking here that for every integer $n$, one has $1\ll \grS(n)\ll 1$.

\begin{theorem}\label{theorem1.1}
When $\psi(t)$ is a sedately increasing function one has
$$E(X;\psi)\ll \psi(X)^2(\log X)^3.$$
\end{theorem}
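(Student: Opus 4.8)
The plan is to run the Hardy--Littlewood method and to isolate the exceptional set through a second-moment (Bessel) estimate. Write
\[
f_k(\alpha)=\sum_{1\le x\le X^{1/k}}e(\alpha x^k)\qquad(k=2,3,6),
\]
so that $R(n)=\int_0^1 f_2(\alpha)^2f_3(\alpha)^2f_6(\alpha)^2e(-n\alpha)\,\d\alpha$. I would dissect the unit interval into major arcs $\grM$, comprising intervals around rationals $a/q$ with $q\le Q$ for a suitable power $Q$ of $X$, and the complementary minor arcs $\grm$. On $\grM$ the usual approximations $f_k(a/q+\beta)\approx q^{-1}S_k(q,a)v_k(\beta)$, together with pruning and the stated bound $1\ll\grS(n)\ll1$, should give
\[
\int_{\grM}f_2^2f_3^2f_6^2e(-n\alpha)\,\d\alpha=\tfrac{27}{32}\sqrt[3]2\,\Gam(\tfrac43)^6\grS(n)n+O\bigl(nX^{-\eta}\bigr)
\]
for some $\eta>0$, uniformly for $n\le X$. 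Since $\psi$ is sedately increasing we have $X^{-\eta}\ll\psi(X)^{-1}$, so this major-arc error sits well below the threshold $n\psi(n)^{-1}$ and contributes nothing to $E(X;\psi)$; matters reduce to the minor-arc integral $R_{\grm}(n)=\int_{\grm}f_2^2f_3^2f_6^2e(-n\alpha)\,\d\alpha$.

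The numbers $R_{\grm}(n)$ are the Fourier coefficients of $\mathbf{1}_{\grm}f_2^2f_3^2f_6^2$, so Bessel's inequality yields
\[
\sum_{X/2<n\le X}|R_{\grm}(n)|^2\le\int_{\grm}|f_2(\alpha)|^4|f_3(\alpha)|^4|f_6(\alpha)|^4\,\d\alpha.
\]
An exceptional $n\in(X/2,X]$ contributes at least $(n\psi(n)^{-1})^2\gg X^2\psi(X)^{-2}$ to the left-hand side, so their number is $\ll\psi(X)^2X^{-2}\int_{\grm}|f_2|^4|f_3|^4|f_6|^4\,\d\alpha$. Summing over the $O(\log X)$ dyadic ranges below $X$ (using that $\psi$ increases) will then convert a per-range mean-value bound of size $X^2(\log X)^2$ into the desired $E(X;\psi)\ll\psi(X)^2(\log X)^3$. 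Thus everything rests on the single estimate
\[
\int_{\grm}|f_2(\alpha)|^4|f_3(\alpha)|^4|f_6(\alpha)|^4\,\d\alpha\ll X^2(\log X)^2.
\]

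This mean value is the heart of the matter and, I expect, overwhelmingly the hardest step, precisely because $2(\tfrac12+\tfrac13+\tfrac16)=2$ places the problem exactly on the convexity barrier. The diagonal solutions alone force a lower bound of order $X^2\log X$, the factor $\log X$ entering through $\int_0^1|f_2|^4\,\d\alpha\asymp X\log X$ and the divisor structure of sums of two squares being the genuine source of every logarithm, so there is essentially no room to spare. In particular no crude device will do: extracting $\sup_{\grm}|f_3|^2|f_6|^2$ and leaving behind the residual $\int_0^1|f_2|^4|f_3|^2|f_6|^2\,\d\alpha\ll X^{2+\eps}$ loses a positive power of $X$, since the supremum and the residual mean value are each individually large. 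Instead I would carry out a finer Hardy--Littlewood dissection of $\grm$ itself and sum over the denominators $q>Q$, exploiting simultaneously the $q^{-1/2}$-decay of the quadratic Gauss sum governing $f_2$ and Weyl's inequality for the cubic and sextic sums $f_3,f_6$: the arcs attached to a given $q$ should contribute $\ll X^3q^{-\kappa}$ for some $\kappa>1$, whence $\sum_{q>Q}X^3q^{-\kappa}\ll X^2$ once $Q$ is a suitable power of $X$. The delicate point, and the crux of the whole argument, will be to perform this summation so that the combined cancellation from all three Weyl sums exactly offsets the number of arcs, delivering the clean power $X^2$ while sharpening every $X^\eps$ down to the two permitted powers of $\log X$; I anticipate that the two-squares fourth moment must be treated through its full correlation structure rather than by a supremum, with the cubic and sextic sums supplying just enough extra decay to remain on the right side of the convexity barrier.
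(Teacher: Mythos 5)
Your plan coincides with the paper's only in its opening moves; the decisive device is different, and it is exactly there that the argument breaks. The mean value on which, as you say, everything rests, namely
\[
\int_{\grm}|f_2(\alp)f_3(\alp)f_6(\alp)|^{4}\d\alp\ll X^{2}(\log X)^{2},
\]
is \emph{false}, by a positive power of $X$, throughout the range of cutoffs $Q$ in which your major-arc step could conceivably be carried out. Take $q=p^{6}\in(Q,2Q]$ with $p>3$ prime, $(a,q)=1$ and $|\bet|\le (qX)^{-1}$. The classical evaluations of complete Weyl sums at prime-power moduli give $|S_2(q,a)|=q^{1/2}$, $|S_3(q,a)|=q^{2/3}$ and $|S_6(q,a)|=q^{5/6}$, while \cite[Theorem 4.1]{Vau1997} supplies $f_k(a/q+\bet)=q^{-1}S_k(q,a)v_k(\bet)+O(q^{1/2+\eps})$ with $|v_k(\bet)|\ge \tfrac{1}{2}P_k$ on this core. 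Provided $Q\le X^{1/4-\del}$ the error never competes with the main term (the critical case is $k=6$, where $q^{1/2+\eps}\ll q^{-1/6}X^{1/6}$ forces exactly $q\ll X^{1/4-\eps}$), whence $|f_2f_3f_6(\alp)|\gg q^{-3}\cdot q^{2}\cdot X=Xq^{-1}$ on each core; these cores are pairwise disjoint and lie in $\grm$ since $Q^{2}=o(X)$. Therefore, for every $Q\le X^{1/4-\del}$,
\[
\int_{\grm}|f_2(\alp)f_3(\alp)f_6(\alp)|^{4}\d\alp\gg\sum_{\substack{p^{6}\in(Q,2Q]\\ p>3\ \mathrm{prime}}}\phi(p^{6})\,\frac{(Xq^{-1})^{4}}{qX}\gg\frac{X^{3}Q^{-23/6}}{\log X}\gg X^{2+\frac{1}{24}-\eps},
\]
so Bessel's inequality can return nothing better than $Z\ll\psi(X)^{2}X^{1/24}$ per dyadic range---a power of $X$, not of $\log X$. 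Exploiting in addition the decay of $v_k$ away from the centres shows the moment is in fact $\gg X^{3}Q^{-17/6}$, so your bound could only hold with $Q\gg X^{6/17}$; yet for $q>X^{1/4}$ the error $O(q^{1/2+\eps})$ already exceeds the largest possible main term $q^{-1/6}X^{1/6}$ in the approximation to $f_6$, so no approximation-based major-arc analysis reaches such $q$, and even the pruning machinery of this paper, specialised to a single $n$ (take $Z=1$ in \eqref{6.8}), produces an error $O(X)$ of the same size as the main term once $q\ge X^{1/3+\del}$. Since $6/17>1/3>1/4$, there is no admissible choice of $Q$: the two halves of your reduction make incompatible demands. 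The same tension is visible in your own summation heuristic: sixth-power moduli show your exponent $\kappa$ cannot exceed $3$, and with $\kappa=3$ the requirement $\sum_{q>Q}X^{3}q^{-\kappa}\ll X^{2}$ forces $Q\gg X^{1/2}$, by which point Dirichlet's theorem leaves no minor arcs at all. This is the convexity barrier reasserting itself after squaring.

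The defect is structural, not technical, and comparing with the paper shows precisely what is missing. Your Bessel step is the paper's inequality \eqref{4.8} followed by Cauchy--Schwarz with the sparsity of the exceptional set thrown away: with $K(\alp)=\sum_{n\in\calZ(X)}\eta_{n}e(-n\alp)$ and $Z=\text{card}\,(\calZ(X))$, Cauchy--Schwarz together with $\int_{0}^{1}|K(\alp)|^{2}\d\alp=Z$ converts \eqref{4.8} into exactly your second-moment bound. The paper's core analysis is engineered to avoid that step: $K$ is kept inside the integral, and H\"older's inequality is arranged (Lemma \ref{lemma6.1}) so that $K$ is paired with $f_3$ through $\int_{0}^{1}|f_3(\alp)^{2}K(\alp)^{2}|\d\alp\ll P_3Z+P_3^{\eps}Z^{2}$ (Lemma \ref{lemma5.2}); this diagonal-domination estimate is where the sparseness of $\calZ(X)$ enters, and it is combined with the sharp mean value of Lemma \ref{lemma5.1} and Br\"udern's pruning lemma (Lemma \ref{lemma5.3}) to yield the $Z$-sensitive bound $ZX\psi(X)^{-1}\ll L(XZ^{1/2}+X^{1-\del^{3}}Z)$, whence $Z\ll L^{2}\psi(X)^{2}$. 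Note that the crucial term $XZ^{1/2}$ is useless for an individual $n$ (at $Z=1$ it matches the main term), so the method proves nothing pointwise---consistent with the problem remaining open---and wins only through its dependence on $Z$, which is exactly what Bessel's inequality, having replaced $\calZ(X)$ by the full interval $(X/2,X]$, is incapable of producing. The missing idea in your proposal is therefore the slim-exceptional-set device of Lemmas \ref{lemma5.2} and \ref{lemma6.1}, not a finer treatment of the twelfth moment; at the convexity barrier no treatment of that moment can succeed.
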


It follows that, for each $\eps>0$, the asymptotic formula
\begin{equation}\label{1.4}
R(n)\sim \tfrac{27}{32}\sqrt[3]{2}\Gam(\tfrac{4}{3})^6\grS(n)n
\end{equation}
fails for at most $O((\log X)^{3+\eps})$ of the integers $n$ with $1\le n\le X$, an extremely slim exceptional set indeed. It follows from the final paragraph of \S4, in fact, that if for a large integer $n$ the asymptotic formula (\ref{1.4}) fails, then this formula instead holds for a modification of $n$ in which at most $3$ of its digits are altered.\par

We note that Hooley \cite{Hoo1981a} has confirmed the anticipated asymptotic formula for the number of representations in the analogous problem in which an additional $h$th power is present in (\ref{1.1}). Also, the methods of Vaughan \cite{Vau1980} apply if one is prepared instead to substitute a fifth power for one of the sixth powers. Both of these situations, of course, are not inhibited by the convexity barrier. There are very few instances in which Waring's problem has been resolved in subconvex situations. Subject to natural conditions of congruential type, Gauss \cite{Gau1801} addressed sums of three squares, and Hooley \cite{Hoo1981b} successfully considered sums of two squares and three cubes. Also, work of Golubeva \cite{Gol2008} addresses the mixed problem
$$n=x_1^2+x_2^2+x_3^3+x_4^3+x_5^4+x_6^{16}+x_7^{4k+1}.$$
Most recently of all, subject to the validity of the Elliott-Halberstam conjecture together with GRH, work of Friedlander and the author \cite{FW2012} addresses sums of two squares and three biquadrates.\par

We finish by recording a convention concerning the use of the number $\eps$. Whenever $\eps$ appears in a statement, either implicitly or explicitly, we assert that the statement holds for each $\eps>0$. Note that the ``value'' of $\eps$ may consequently change from statement to statement.

\section{Preparatory manoeuvres} The method that we employ to prove Theorem \ref{theorem1.1} is motivated by our earlier work on sums of one square and five cubes \cite{Woo2002a}. Suppose that $X$ is a large positive number, and let $\psi(t)$ be a sedately increasing function. We denote by $\calZ(X)$ the set of integers $n$ with $X/2<n\le X$ for which the lower bound (\ref{1.3}) holds, and we abbreviate $\text{card}(\calZ(X))$ to $Z$. Write $P_k$ for $[X^{1/k}]$, and define the exponential sum $f_k(\alp)$ by
$$f_k(\alp)=\sum_{1\le x\le P_k}e(\alp x^k).$$
Also, when $Q$ is a positive integer, let $\grM(Q)$ denote the union of the intervals
$$\grM(q,a)=\{ \alp \in [0,1):|q\alp -a|\le QX^{-1}\},$$
with $0\le a\le q\le Q$ and $(a,q)=1$. Then, when $1\le Q\le 2X^{1/2}$, we put $\grN(Q)=\grM(Q)\setminus \grM(Q/2)$. In order to ensure that each $\alp\in [0,1)$ is associated with a uniquely defined arc $\grM(q,a)$, we adopt the convention that when $\alp$ lies in more than one arc $\grM(q,a)\subseteq \grM(Q)$, then it is declared to lie in the arc for which $q$ is least. Finally, let $\nu$ be a sufficiently small positive number, and write $W=X^\nu$. We then take $\grP$ to be the union of the intervals
$$\grP(q,a)=\{\alp \in [0,1): |\alp-a/q|\le WX^{-1}\},$$
with $0\le a\le q\le W$ and $(a,q)=1$, and we set $\grp=[0,1)\setminus \grP$.\par

We initiate proceedings by outlining how the methods of \cite[Chapter 4]{Vau1997} lead to the asymptotic formula
\begin{equation}\label{4.1}
\int_\grP f_2(\alp)^2f_3(\alp)^2f_6(\alp)^2e(-n\alp)\d\alp =\frac{\Gam(\tfrac{3}{2})^2\Gam(\tfrac{4}{3})^2\Gam(\tfrac{7}{6})^2}{\Gam(2)}\grS(n)n+O(n^{1-\tau}),
\end{equation}
for a suitably small positive number $\tau$. Here, the singular series $\grS(n)$ is the one defined in (\ref{1.2}). Recall the identities
$$\Gam(z)\Gam(1-z)=\pi/\sin (\pi z)\quad \text{and}\quad \Gam(z)\Gam(z+\tfrac{1}{2})=2^{1-2z}\sqrt{\pi}\Gam(2z).$$
Then one finds that
$$3\Gam(\tfrac{4}{3})\Gam(\tfrac{2}{3})=\pi/\sin (\pi/3)\quad \text{and}\quad \Gam(\tfrac{2}{3})\Gam(\tfrac{7}{6})=2^{1-4/3}\sqrt{\pi}\Gam(\tfrac{4}{3}),$$
and hence
$$\frac{\Gam(\tfrac{3}{2})^2\Gam(\tfrac{4}{3})^2\Gam(\tfrac{7}{6})^2}{\Gam(2)}=\frac{\tfrac{1}{4}\pi \Gam(\tfrac{4}{3})^2(2^{-1/3}\sqrt{\pi}\Gam(\tfrac{4}{3}))^2}{\Gam(\tfrac{2}{3})^2}=\frac{2^{-8/3}\pi^2\Gam(\tfrac{4}{3})^6}{(2\pi/3\sqrt{3})^2}=\tfrac{27}{32}\sqrt[3]{2}\Gam(\tfrac{4}{3})^6.$$

\par Write
$$v_k(\bet)=\int_0^{P_k}e(\bet \gam^k)\d\gam ,$$
and for the time being define $f_k^*(\alp)$ for $\alp\in \grP(q,a)\subseteq \grP$ by putting
\begin{equation}\label{4.2}
f_k^*(\alp)=q^{-1}S_k(q,a)v_k(\alp-a/q).
\end{equation}
Then it follows from \cite[Theorem 4.1]{Vau1997} that whenever $\alp \in \grP(q,a)\subseteq \grP$, one has
$$f_k(\alp)-f_k^*(\alp)\ll q^{1/2+\eps}\ll W^{1/2+\eps}.$$
The measure of $\grP$ is $O(W^3X^{-1})$, and hence we deduce that
\begin{align*}
\int_\grP f_2(\alp)^2f_3(\alp)^2f_6(\alp)^2e(-n\alp)\d\alp -&\int_\grP f_2^*(\alp)^2f_3^*(\alp)^2f_6^*(\alp)^2e(-n\alp)\d\alp \\
&\ll W^4X^{5/6}.
\end{align*}
A routine computation leads from (\ref{4.2}) to the formula
$$\int_\grP f_2^*(\alp)^2f_3^*(\alp)^2f_6^*(\alp)^2e(-n\alp)\d\alp =\grS(n;W)J(n;W),$$
where
\begin{equation}\label{4.3}
\grS(n;W)=\sum_{1\le q\le W}A(q;n),
\end{equation}
in which we have written
\begin{equation}\label{4.4}
A(q;n)=\sum^q_{\substack{a=1\\ (a,q)=1}}q^{-6}S_2(q,a)^2S_3(q,a)^2S_6(q,a)^2e(-na/q),
\end{equation}
and
$$J(n;W)=\int_{-WX^{-1}}^{WX^{-1}}v_2(\bet)^2v_3(\bet)^2v_6(\bet)^2e(-\bet n)\d\bet.$$
Thus we may conclude at this stage of the argument that
\begin{equation}\label{4.5}
\int_\grP f_2(\alp)^2f_3(\alp)^2f_6(\alp)^2e(-n\alp)\d\alp =\grS(n;W)J(n;W)+O(X^{1-\tau}),
\end{equation}
for a suitable positive number $\tau$.\par

We next recall from \cite[Theorem 7.3]{Vau1997} that
$$v_k(\bet)\ll P_k(1+|\bet|X)^{-1/k},$$
whence the singular integral $J(n;W)$ converges absolutely as $W\rightarrow \infty$. The discussion concluding \cite[Chapter 4]{Dav1963} consequently delivers the relation
\begin{equation}\label{4.6}
J(n;W)=\frac{\Gam(\tfrac{3}{2})^2\Gam(\tfrac{4}{3})^2\Gam(\tfrac{7}{6})^2}{\Gam(2)}n+O(n^{1-\tau}).
\end{equation}

\par In order to analyse the truncated singular series $\grS(n;W)$, we begin by defining the multiplicative function $w_k(q)$ by taking
$$w_k(p^{uk+v})=\begin{cases} kp^{-u-1/2},&\text{when $u\ge 0$ and $v=1$,}\\
p^{-u-1},&\text{when $u\ge 0$ and $2\le v\le k$.}\end{cases}$$
Then according to \cite[Lemma 3]{Vau1986}, whenever $a\in \dbZ$ and $q\in \dbN$ satisfy $(a,q)=1$, one has $q^{-1}S_k(q,a)\ll w_k(q)$. On employing this estimate within (\ref{4.4}), we see that when $q=p^h$, then
$$q^{1/2}A(q;n)\ll q^{3/2}w_2(q)^2w_3(q)^2w_6(q)^2\ll p^{-\max\{3/2,h/2\}},$$
and thus
$$\sum_{h=1}^\infty p^{h/2}|A(p^h;n)|\ll p^{-3/2}+\sum_{h=4}^\infty p^{-h/2}\ll p^{-3/2}.$$
By applying the multiplicativity of the function $A(q;n)$, we consequently deduce that for a suitable positive number $B$, one has
\begin{align*}
\sum_{q=1}^\infty (q/W)^{1/2}|A(q;n)|&=W^{-1/2}\prod_p \Bigl( 1+\sum_{h=1}^\infty p^{h/2}|A(p^h;n)|\Bigr) \\
&\ll W^{-1/2}\prod_p (1+Bp^{-3/2}) \ll W^{-1/2}.
\end{align*}
In this way, we find that
$$\sum_{q>W}|A(q;n)|\ll W^{-1/2},$$
so that the singular series $\grS(n)$ defined in (\ref{1.2}) converges absolutely. In particular, we find from (\ref{4.3}) that one has
$$\grS(n)-\grS(n;W)\ll W^{-1/2}.$$
On recalling (\ref{4.5}) and (\ref{4.6}), we confirm that the formula (\ref{4.1}) does indeed hold.\par

We pause here to note that the standard theory of singular series in Waring's problem (see \cite[Chapter 4]{Vau1997}) shows that $\grS(n)\gg 1$ provided only that the congruence
$$x_1^2+x_2^2+x_3^3+x_4^3+x_5^6+x_6^6\equiv n\mmod{q}$$
is soluble for every natural number $q$. That such is indeed the case follows directly by applying the Cauchy-Davenport Lemma (see \cite[Lemma 2.14]{Vau1997}). This confirms the remark preceeding the statement of Theorem \ref{theorem1.1} above to the effect that $1\ll \grS(n)\ll 1$ for every integer $n$.\par

We are now equipped to examine the exceptional set $\calZ(X)$. For each integer $n\in \calZ(X)$, it follows from (\ref{1.3}) via orthogonality that
$$\Bigl| \int_0^1 f_2(\alp)^2f_3(\alp)^2f_6(\alp)^2e(-n\alp)\d\alp -\tfrac{27}{32}\sqrt[3]{2}\Gam(\tfrac{4}{3})^6\grS(n)n\Bigr| >\tfrac{1}{2}X\psi(X)^{-1}.$$
By employing the relation (\ref{4.1}), therefore, we obtain the lower bound
$$\Bigl| \int_\grp f_2(\alp)^2f_3(\alp)^2f_6(\alp)^2e(-n\alp)\d\alp \Bigr| >\tfrac{1}{4} X\psi(X)^{-1},$$
and from this we see that
\begin{equation}\label{4.7}
\sum_{n\in \calZ(X)}\Bigl| \int_\grp f_2(\alp)^2f_3(\alp)^2f_6(\alp)^2e(-n\alp)\d\alp \Bigr| \gg ZX\psi(X)^{-1}.
\end{equation}

\par For each integer $n$ with $n\in \calZ(X)$, there exists a complex number $\eta_n$, with $|\eta_n|=1$, satisfying the condition that
$$\Bigl| \int_\grp f_2(\alp)^2f_3(\alp)^2f_6(\alp)^2e(-n\alp)\d\alp \Bigr| =\eta_n\int_\grp f_2(\alp)^2f_3(\alp)^2f_6(\alp)^2e(-n\alp)\d\alp .$$
Define the exponential sums $K(\alp)$ and $K^*(\alp)$ by putting
$$K(\alp)=\sum_{n\in \calZ(X)}\eta_ne(-n\alp)\quad \text{and}\quad K^*(\alp)=\sum_{n\in \calZ(X)}e(-n\alp).$$
Then it follows from (\ref{4.7}) that
\begin{align}
ZX\psi(X)^{-1}&\ll \sum_{n\in \calZ(X)}\eta_n\int_\grp f_2(\alp)^2f_3(\alp)^2f_6(\alp)^2e(-n\alp)\d\alp \notag\\
&\le \int_\grp |f_2(\alp)^2f_3(\alp)^2f_6(\alp)^2K(\alp)|\d\alp .\label{4.8}
\end{align}
This relation is the starting point for our estimation of $Z$, which we achieve by bounding the integral on the right hand side of (\ref{4.8}).

\section{Auxiliary estimates} Before making further progress, we pause to derive some auxiliary mean value estimates of use in our subsequent argument, and these we collect together in the present section. In this and the next section, we write $L=L(X)$ for $\log X$. We begin by supplying a sharp version of a mean value involving both $f_3(\alp)$ and $f_6(\alp)$.

\begin{lemma}\label{lemma5.1} One has
\begin{equation}\label{5.1}
\int_0^1|f_3(\alp)^2f_6(\alp)^4|\d\alp \le 3X^{2/3}+O(X^{23/36+\eps}).
\end{equation}
\end{lemma}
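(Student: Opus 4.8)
The plan is to read the left-hand side as a Diophantine counting function and to isolate its diagonal. Since $\abs{f_3(\alp)^2f_6(\alp)^4}=\abs{f_3(\alp)f_6(\alp)^2}^2$, orthogonality gives
\[
\int_0^1\abs{f_3(\alp)^2f_6(\alp)^4}\d\alp=N,
\]
where $N$ is the number of solutions of
\[
x_1^3+y_1^6+y_2^6=x_2^3+y_3^6+y_4^6
\]
with $1\le x_1,x_2\le P_3$ and $1\le y_1,y_2,y_3,y_4\le P_6$. I would split $N=N_0+N_1$, where $N_0$ counts the strictly diagonal solutions, namely those with $x_1=x_2$ and $\{y_1,y_2\}=\{y_3,y_4\}$ as multisets, and $N_1$ counts all the rest.

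The diagonal term is elementary and supplies the main term. There are $P_3$ choices with $x_1=x_2$, and exactly $2P_6^2-P_6$ quadruples $(y_1,y_2,y_3,y_4)$ with $\{y_1,y_2\}=\{y_3,y_4\}$, so
\[
N_0=P_3\bigl(2P_6^2-P_6\bigr)\le 2P_3P_6^2\le 2X^{2/3},
\]
using $P_3\le X^{1/3}$ and $P_6^2\le X^{1/3}$. This already lies within the claimed bound $3X^{2/3}$, the surplus providing room to absorb the secondary contributions.

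Everything now rests on showing $N_1\ll X^{23/36+\eps}$, which is the crux. The solutions counted by $N_1$ are precisely those for which the difference of two cubes $x_1^3-x_2^3$ agrees with the difference of two sums of sixth powers $y_3^6+y_4^6-y_1^6-y_2^6$ without the two sides matching term by term; this includes both the case $x_1=x_2$ (with $\{y_1,y_2\}\ne\{y_3,y_4\}$) and the case $x_1\ne x_2$. In the latter, which is the representative difficulty, write $m=x_1^3-x_2^3\ne0$: the factorisation $m=(x_1-x_2)(x_1^2+x_1x_2+x_2^2)$ together with the divisor bound shows that each $m\ne0$ arises from $O(X^\eps)$ pairs $(x_1,x_2)$, whence this portion is
\[
\ll X^\eps\sum_{m\in\mathcal D}\varrho(m),
\]
where $\mathcal D$ is the set of nonzero differences of two cubes not exceeding $X$ and $\varrho(m)$ is the number of representations of $m$ in the form $y_3^6+y_4^6-y_1^6-y_2^6$; the degenerate case $x_1=x_2$ is handled analogously, by bounding the number of nontrivial equal sums of two sixth powers. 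The main obstacle is that the crude application of Cauchy--Schwarz, using $\abs{\mathcal D}\ll X^{2/3}$ together with the eighth moment bound $\int_0^1\abs{f_6(\alp)}^8\d\alp\ll X^{2/3+\eps}$, yields only $\ll X^{2/3+\eps}$ and so fails to beat the diagonal. To gain the additional saving of $X^{1/36}$ needed to reach the exponent $23/36$, one must exploit the arithmetic of the cube differences themselves --- their spacing and divisor structure --- so as to show that the sparse set $\mathcal D$ meets only a reduced proportion of the additive energy of the sums of two sixth powers. This incidence-type estimate, linking differences of two cubes to differences of sums of two sixth powers, is the step I expect to be hardest, and I would model it on the analogous mean value treated in \cite{Woo2002a}.
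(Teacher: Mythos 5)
Your reduction to counting solutions of $x_1^3-x_2^3=y_1^6+y_2^6-y_3^6-y_4^6$ matches the paper, but your decomposition then diverges in a way that creates a fatal gap. You take the main term to be only the strict diagonal ($x_1=x_2$ and $\{y_1,y_2\}=\{y_3,y_4\}$), of size at most $2X^{2/3}$, and you therefore need \emph{all} remaining solutions to number $O(X^{23/36+\eps})$. You candidly flag this as the crux and defer it to an ``incidence-type estimate'' to be modelled on earlier work, but no such estimate is available: for the solutions with $x_1\ne x_2$ in which $m=y_1^6+y_2^6-y_3^6-y_4^6$ has exactly \emph{one} representation as a difference of two cubes, nothing better than the trivial bound $P_6^4=X^{2/3}$ is known (heuristically this count should be near $X^{1/3}$, but proving any power saving here is an open problem). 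This is precisely why the lemma is stated with the constant $3$ rather than $2$: the paper's proof \emph{absorbs} this unique-representation contribution, bounded trivially by $P_6^4=X^{2/3}$, into the main term alongside the $2P_3P_6^2=2X^{2/3}$ from the $x_1=x_2$ solutions (the latter handled via the Greaves/Skinner--Wooley/Hooley asymptotic $I_2=2P_6^2+O(P_6^{2-\del})$, much as you propose). The statement you set out to prove, namely main term $2X^{2/3}$ plus error $O(X^{23/36+\eps})$, is strictly stronger than the lemma and is not provable by current technology.

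The second missing ingredient is how the paper handles the solutions with $x_1\ne x_2$ in which $m$ has \emph{two or more} representations as $x_1^3-x_2^3$: by Heath-Brown's work on rational points on cubic surfaces, the set $\grX$ of such integers $m$ has cardinality $O(P_3^{4/3+\eps})=O(X^{4/9+\eps})$, which is far smaller than your $\abs{\mathcal{D}}\ll X^{2/3}$. Cauchy's inequality restricted to $\grX$, together with Hua's lemma, then gives
\begin{equation*}
\sum_{m\in\grX}\rho(m)\ll \bigl(P_3^{4/3+\eps}\bigr)^{1/2}\Bigl(\int_0^1\abs{f_6(\alp)}^8\d\alp\Bigr)^{1/2}\ll \bigl(X^{4/9+\eps}\bigr)^{1/2}\bigl(X^{5/6+\eps}\bigr)^{1/2}\ll X^{23/36+\eps},
\end{equation*}
which is where the exponent $23/36$ actually comes from. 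Note also that your quoted eighth-moment bound $\int_0^1\abs{f_6(\alp)}^8\d\alp\ll X^{2/3+\eps}$ is the conjectural (essentially optimal) one; Hua's lemma only gives $P_6^{5+\eps}=X^{5/6+\eps}$, and it is the smallness of $\grX$, not of the eighth moment, that rescues the Cauchy--Schwarz step. So the two ideas you are missing are: split the $x_1\ne x_2$ solutions by the number of representations of $m$ as a difference of cubes, conceding an extra $X^{2/3}$ to the main term for the unique-representation part; and invoke Heath-Brown's paucity bound for the multiple-representation part.
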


\begin{proof} By orthogonality, the integral $I_1$ on the left hand side of (\ref{5.1}) counts the number of integral solutions of the equation
\begin{equation}\label{5.2}
x_1^3-x_2^3=y_1^6+y_2^6-y_3^6-y_4^6,
\end{equation}
with $1\le x_1,x_2\le P_3$ and $1\le y_i\le P_6$ $(1\le i\le 4)$. The number of solutions of (\ref{5.2}) counted by $I_1$ with $x_1=x_2$ is at most $P_3I_2$, where $I_2$ denotes the number of integral solutions of the equation
$$y_1^6+y_2^6=y_3^6+y_4^6,$$
with $1\le y_i\le P_6$ $(1\le i\le 4)$. It was shown by Greaves \cite{Gre1994}, Skinner and Wooley \cite{SW1995} and Hooley \cite{Hoo1996}, in close chronological proximity, that $I_2=2P_6^2+O(P_6^{2-\del})$ for some $\del>0$. The last named source obtains such a conclusion for any 
$\del<\tfrac{1}{3}$. The most recent results can be found in Heath-Brown \cite[Theorem 11]{HB2002}. On the other hand, the solutions of (\ref{5.2}) counted by $I_1$ with $x_1\ne x_2$ are of two types. There are the solutions $\bfx$, $\bfy$ in which the integer
\begin{equation}\label{5.3}
y_1^6+y_2^6-y_3^6-y_4^6
\end{equation}
has precisely one representation in the form $x_1^3-x_2^3$, of which there are at most $P_6^4$. When the expression (\ref{5.3}) has more than one representation in the shape $x_1^3-x_2^3$, meanwhile, we must proceed in a more elaborate fashion.\par

It follows from the work of Heath-Brown \cite{HB1997}, as explained following equation (9.9) of Wooley \cite{Woo2002b}, that there are $O(P_3^{4/3+\eps})$ integers $m$ having two or more representations in the shape $m=x_1^3-x_2^3$, with $1\le x_1,x_2\le P_3$. Write $\grX$ for the set of such integers $m$, and $\rho(n)$ for the number of representations of an integer $n$ in the shape $n=y_1^6+y_2^6-y_3^6-y_4^6$, with $1\le y_i\le P_6$ $(1\le i\le 4)$. Then by Cauchy's inequality, the number of solutions of (\ref{5.2}), in which (\ref{5.3}) has more than one representation in the shape $x_1^3-x_2^3$, is
$$\sum_{m\in \grX}\rho(m)\le \Bigl( \sum_{m\in \grX}1\Bigr)^{1/2}\Bigl( \sum_{m\in \grX}\rho(m)^2\Bigr)^{1/2}\ll (P_3^{4/3+\eps})^{1/2}\Bigl( \int_0^1|f_6(\alp)|^8\d\alp \Bigr)^{1/2}.$$
By Hua's Lemma (see \cite[Lemma 2.5]{Vau1997}), one has
$$\int_0^1|f_6(\alp)|^8\d\alp \ll P_6^{5+\eps},$$
and thus we conclude that
$$\sum_{m\in \grX}\rho(m)\ll (P_3^{4/3+\eps})^{1/2}(P_6^{5+\eps})^{1/2}\ll X^{23/36+\eps}.$$

\par Combining all of the above contributions, therefore, we conclude that
$$I_1\le 2P_3P_6^2+P_6^4+O(X^{23/36+\eps})=3X^{2/3}+O(X^{23/36+\eps}).$$
This completes the proof of the lemma.
\end{proof}

\begin{lemma}\label{lemma5.2} One has
$$\int_0^1|f_3(\alp)^2K(\alp)^2|\d\alp \ll P_3Z+P_3^\eps Z^2.$$
\end{lemma}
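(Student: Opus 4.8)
The plan is to recast the integral as a counting problem through orthogonality. Since $|f_3(\alp)^2K(\alp)^2|=|f_3(\alp)|^2|K(\alp)|^2$, and since
$$|f_3(\alp)|^2|K(\alp)|^2=\Bigl(\sum_{1\le x_1,x_2\le P_3}e(\alp(x_1^3-x_2^3))\Bigr)\Bigl(\sum_{m,n\in\calZ(X)}\eta_m\overline{\eta_n}e(-(m-n)\alp)\Bigr),$$
integrating over $[0,1)$ and recalling that $|\eta_n|=1$ gives
$$\int_0^1|f_3(\alp)^2K(\alp)^2|\d\alp\le N,$$
where $N$ denotes the number of integral solutions of the equation $x_1^3-x_2^3=m-n$ with $1\le x_1,x_2\le P_3$ and $m,n\in\calZ(X)$. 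It thus suffices to bound $N$.

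Next I would split $N$ according to whether or not $x_1=x_2$. The diagonal solutions with $x_1=x_2$ force $m=n$, and so contribute at most $P_3Z$ to $N$, accounting for the first term of the desired bound. For the off-diagonal solutions, with $x_1\ne x_2$, I would fix a pair $m,n\in\calZ(X)$ with $m\ne n$ and count representations of the fixed nonzero integer $k=m-n$ in the shape $x_1^3-x_2^3$. Since $X/2<m,n\le X$, one has $0<|k|<X=P_3^3$, and a standard divisor argument shows that the number of such representations is $O(|k|^\eps)=O(P_3^\eps)$. Summing over the at most $Z^2$ admissible pairs $(m,n)$ then bounds the off-diagonal contribution by $O(P_3^\eps Z^2)$. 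Combining the two contributions yields $N\ll P_3Z+P_3^\eps Z^2$, which is precisely the claimed estimate.

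The only point requiring genuine care is the divisor-type estimate for the number of representations of a nonzero integer as a difference of two cubes. Writing $d=x_1-x_2$, the factorization $x_1^3-x_2^3=d(3x_2^2+3dx_2+d^2)$ shows that $d$ must divide $k$; there are $O(|k|^\eps)$ such divisors $d$, and for each the constraint $3x_2^2+3dx_2+d^2=k/d$ is a fixed quadratic in $x_2$, leaving $O(1)$ admissible values. This produces the bound $O(|k|^\eps)\ll P_3^\eps$ after adjusting $\eps$ in view of $|k|<P_3^3$. The main obstacle is therefore bookkeeping rather than any deep arithmetic input: once the representation count for the difference of cubes is in hand, the diagonal and off-diagonal contributions assemble immediately into the stated inequality.
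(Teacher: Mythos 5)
Your proof is correct. The paper disposes of this lemma by citing \cite[Lemma 2.1]{Woo2003} ``and its proof,'' and your argument---orthogonality to reduce to counting solutions of $x_1^3-x_2^3=m-n$, the diagonal contribution $P_3Z$, and the divisor-plus-quadratic bound of $O(|k|^\eps)$ representations of a fixed nonzero $k$ as a difference of two cubes---is precisely the standard argument behind that cited lemma, so you have in effect written out the proof the paper delegates to the reference.
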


\begin{proof} The desired conclusion is immediate from \cite[Lemma 2.1]{Woo2003} and its proof.
\end{proof}

We finish this section by recording a variant of Br\"udern's pruning lemma suitable for our purposes.

\begin{lemma}\label{lemma5.3} Let $N$, $Q$ be real numbers with $Q\le N$. For $0\le a\le q\le Q$ and $(a,q)=1$, let $\calM(q,a)$ denote an interval contained in $[a/q-\tfrac{1}{2},a/q+\tfrac{1}{2}]$. Write $\calM$ for the union of all $\calM(q,a)$. Let $\gam<1$ be a positive number, and let $G:\calM\rightarrow \dbC$ be a function which for $\alp \in \calM(q,a)$ satisfies
$$G(\alp)\ll (q+N|q\alp -a|)^{-1-\gam}.$$
Furthermore, let $\Psi:\dbR\rightarrow [0,\infty)$ be a function with a Fourier expansion
$$\Psi(\alp)=\sum_{|h|\le H}\psi_he(\alp h)$$
such that $\log H\ll \log N$. Then
$$\int_\calM G(\alp)\Psi(\alp)\d\alp \ll_\gam N^{-1}\Bigl(Q^{1-\gam}\psi_0+N^{\eps}\sum_{h\ne 0}|\psi_h|\Bigr).$$
\end{lemma}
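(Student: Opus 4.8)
The plan is to prove Lemma~\ref{lemma5.3} by decomposing the integral over $\calM$ according to the individual arcs $\calM(q,a)$, substituting the Fourier expansion of $\Psi$, and treating the contribution of the mean term $\psi_0$ separately from that of the oscillatory terms $\psi_h$ with $h\ne 0$. The key observation is that the hypothesis $G(\alp)\ll (q+N|q\alp-a|)^{-1-\gam}$ provides both an $L^1$-type bound on each arc, sufficient to handle the mean term, and enough decay to control the oscillatory terms once we integrate $e(\alp h)$ against $G$.

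First I would estimate the contribution of the mean term $\psi_0$. On a single arc $\calM(q,a)$, the change of variables $\bet=\alp-a/q$ together with the hypothesis yields
$$\int_{\calM(q,a)}|G(\alp)|\d\alp \ll \int_{-1/2}^{1/2}(q+Nq|\bet|)^{-1-\gam}\d\bet \ll q^{-1-\gam}\int_{0}^\infty (1+N|\bet|)^{-1-\gam}\d\bet \ll_\gam q^{-1-\gam}N^{-1}.$$
Summing over $0\le a\le q\le Q$ with $(a,q)=1$, of which there are at most $q$ for each $q$, gives
$$\psi_0\int_\calM |G(\alp)|\d\alp \ll_\gam \psi_0 N^{-1}\sum_{q\le Q}q\cdot q^{-1-\gam}\ll_\gam \psi_0 N^{-1}\sum_{q\le Q}q^{-\gam}\ll_\gam \psi_0 N^{-1}Q^{1-\gam},$$
where the final estimate uses $\gam<1$. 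This accounts for the first term in the claimed bound.

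Next I would handle the oscillatory terms. For fixed $h\ne 0$, the contribution is $\psi_h\int_\calM G(\alp)e(\alp h)\d\alp$, and the crudest bound $\bigl|\int_\calM G(\alp)e(\alp h)\d\alp\bigr|\le \int_\calM|G(\alp)|\d\alp\ll_\gam N^{-1}Q^{1-\gam}$ is wasteful but already suggestive. The point is that summing $|\psi_h|$ over all $h\ne 0$ and inserting the trivial $Q^{1-\gam}\le N^{1-\gam}\ll N^\eps\cdot N$ absorption is too lossy; instead one uses that the total measure of $\calM$ is $O(N^{-1}\sum_{q\le Q}q^{-\gam}\cdot q)=O(N^{-1}Q^{1-\gam})$ but retains the factor $N^\eps$ rather than $Q^{1-\gam}$ by bounding $\int_\calM|G(\alp)|\d\alp\ll_\gam N^{-1}Q^{1-\gam}\ll N^{-1+\eps}$, valid because $Q\le N$ forces $Q^{1-\gam}\le N^{1-\gam}\le N$. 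Thus
$$\sum_{h\ne 0}|\psi_h|\Bigl|\int_\calM G(\alp)e(\alp h)\d\alp\Bigr|\ll_\gam N^{-1}\cdot N^{\eps}\sum_{h\ne 0}|\psi_h|,$$
which matches the second term. Here the condition $\log H\ll \log N$ guarantees that the number of terms $H$ is at most $N^{O(1)}$, so the summation over $h$ introduces at most a power of $N^\eps$ and no extraneous logarithmic losses beyond those already absorbed into $N^\eps$.

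The main obstacle, and the step deserving the most care, is verifying that the oscillatory estimate genuinely earns the sharp $N^\eps$ rather than a larger power of $Q$; this requires combining the measure bound $|\calM|\ll_\gam N^{-1}Q^{1-\gam}$ with the constraint $Q\le N$ to convert $Q^{1-\gam}$ into $N^\eps$ uniformly in $h$. Since $\Psi\ge 0$ is not needed for the oscillatory terms (only the triangle inequality is used there), the nonnegativity hypothesis on $\Psi$ plays no essential role beyond ensuring $\psi_0\ge 0$, so one should confirm that the decomposition $\Psi=\psi_0+\sum_{h\ne 0}\psi_h e(\cdot\,h)$ is applied legitimately and that interchanging the finite sum over $|h|\le H$ with the integral over $\calM$ is justified by the finiteness of $H$. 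Assembling the two contributions then yields the stated bound, completing the proof.
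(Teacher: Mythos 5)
Your treatment of the mean term $\psi_0$ is correct and matches the paper's. But the oscillatory terms contain a fatal error: you claim that $\int_\calM|G(\alp)|\d\alp\ll_\gam N^{-1}Q^{1-\gam}\ll N^{-1+\eps}$ ``because $Q\le N$ forces $Q^{1-\gam}\le N^{1-\gam}\le N$''. That deduction is false: $Q\le N$ gives only $N^{-1}Q^{1-\gam}\le N^{-\gam}$, and $N^{-\gam}\ll N^{-1+\eps}$ would require $\gam\ge 1-\eps$, whereas the lemma assumes only that $\gam<1$ is positive --- and in the application (Lemma \ref{lemma6.1}) $\gam$ is deliberately taken \emph{small}. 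Take $\gam=\tfrac12$ and $Q=N$: your bound gives $N^{-1/2}$, enormously larger than $N^{-1+\eps}$. Applying the triangle inequality arc by arc to the terms $h\ne 0$ can never do better than $N^{-1}Q^{1-\gam}\sum_{h\ne 0}|\psi_h|$, which is exactly the loss the lemma is designed to avoid; with that weaker bound the pruning in Section 4 collapses.

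The missing idea is arithmetic cancellation in the sum over the numerators $a$, which is the entire point of Br\"udern's lemma and of the paper's proof. Since $\Psi\ge 0$ and $|G(\alp)|\ll (q+N|q\alp-a|)^{-1-\gam}$, one first replaces $G$ by this explicit nonnegative majorant and enlarges each arc to the full interval $[a/q-\tfrac12,a/q+\tfrac12]$ (both steps need $\Psi\ge 0$, so the nonnegativity hypothesis is essential, contrary to your closing remark). Only \emph{then} does one insert the Fourier expansion, so that the integral over the interval centred at $a/q$ factors as $q^{-1-\gam}\,e(ah/q)\,\rho_h$ with $\rho_h=\int_{-1/2}^{1/2}(1+N|\alp|)^{-1-\gam}e(\alp h)\d\alp\ll_\gam N^{-1}$. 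Summing over $a$ with $(a,q)=1$ produces the Ramanujan sum $c_q(h)=\sum_{(a,q)=1}e(ah/q)$, and for $h\ne 0$ one has $|c_q(h)|\le\sum_{d\mid(q,h)}d$, whence
$$\sum_{q\le Q}q^{-1-\gam}\sum_{d\mid (q,h)}d\le \sum_{d\mid h}d^{-\gam}\sum_{m\ge 1}m^{-1-\gam}\ll_\gam \tau(h)\ll N^\eps,$$
using the divisor bound together with $\log H\ll\log N$. This Ramanujan-sum cancellation, not any measure-theoretic absorption, is what converts the would-be factor $Q^{1-\gam}$ into $N^\eps$ for the terms $h\ne 0$; by taking absolute values inside the $a$-sum you discard precisely this cancellation, and the argument cannot be repaired without restoring it.
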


\begin{proof} Following the argument of the proof of \cite[Lemma 2]{Bru1988}, mutatis mutandis, we see that
$$\int_\calM G(\alp)\Psi(\alp)\d\alp \ll \sum_{q\le Q}\sum^q_{\substack{a=1\\ (a,q)=1}}\sum_{|h|\le H}q^{-1-\gam}\psi_h\rho_he(ah/q),$$
where
\begin{equation}\label{5.4}
\rho_h=\int_{-1/2}^{1/2}(1+N|\alp|)^{-1-\gam}e(\alp h)\d\alp \ll_\gam N^{-1}.
\end{equation}
Hence, just as in the analogous argument of \cite[Lemma 2]{Bru1988}, we obtain
\begin{align*}
\int_\calM G(\alp)\Psi(\alp)\d\alp &\ll \sum_{q\le Q}q^{-\gam}\psi_0\rho_0+\sum_{h\ne 0}|\psi_h\rho_h|\sum_{q\le Q}\sum_{d|(q,h)}dq^{-1-\gam}\\
&\ll_\gam Q^{1-\gam}\psi_0\rho_0+N^\eps \sum_{h\ne 0}|\psi_h\rho_h|.
\end{align*}
Recalling the upper bound (\ref{5.4}), the desired conclusion follows at once.
\end{proof}

\section{The core analysis} We initiate the main thrust of our work on the integral on the right hand side of (\ref{4.8}) by introducing some auxiliary mean values that include major arc majorants to $f_k(\alp)$ $(k=2,3)$. Let $\del$ be a sufficiently small positive number. Define the function $\ftil_2(\alp)$ for $\alp \in \grM(q,a)\subseteq \grM(Q)$ by taking
\begin{equation}\label{6.1}
\ftil_2(\alp)=P_2(q+P_2^2|q\alp-a|)^{-1/2},
\end{equation}
and then put
\begin{equation}\label{6.2}
T_1(Q)=\int_{\grN(Q)}\ftil_2(\alp)^2|f_3(\alp)^2f_6(\alp)^2K(\alp)|\d\alp .
\end{equation}
In addition, define the function $f_3^*(\alp)$ for $\alp \in \grM(q,a)\subseteq \grM(Q)$ by taking
\begin{equation}\label{6.3}
f_3^*(\alp)=q^{-1}S_3(q,a)v_3(\alp-a/q), 
\end{equation}
and then put
\begin{equation}\label{6.4}
T_2(Q)=\int_{\grN(Q)}\ftil_2(\alp)^2|f_3^*(\alp)^2f_6(\alp)^2K(\alp)|\d\alp .
\end{equation}

\begin{lemma}\label{lemma6.1}
When $1\le Q\le X^{1/2}$, one has
$$T_1(Q)\ll XZ^{1/2}+X^{7/6+\eps}Q^{\del-1/2}Z.$$
\end{lemma}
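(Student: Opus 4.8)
The plan is to apply Cauchy's inequality to decouple the factor $K(\alp)$ from the sixth-power sum, arranging the split so that the two resulting mean values are exactly those prepared in Lemmas \ref{lemma5.1} and \ref{lemma5.2}. Writing $\ftil_2^2|f_3^2f_6^2K|=(\ftil_2^2|f_3^2f_6^4|)^{1/2}(\ftil_2^2|f_3^2K^2|)^{1/2}$, one obtains
$$T_1(Q)\le \Bigl(\int_{\grN(Q)}\ftil_2(\alp)^2|f_3(\alp)^2f_6(\alp)^4|\d\alp\Bigr)^{1/2}\Bigl(\int_{\grN(Q)}\ftil_2(\alp)^2|f_3(\alp)^2K(\alp)^2|\d\alp\Bigr)^{1/2}=\mathcal{I}_1^{1/2}\mathcal{I}_2^{1/2}.$$

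First I would dispatch $\mathcal{I}_1$ by a pointwise bound on the majorant. Since $P_2^2\asymp X$, on $\grN(Q)=\grM(Q)\setminus\grM(Q/2)$ one has $q+P_2^2|q\alp-a|\asymp Q$, the lower bound holding because $\alp\notin\grM(Q/2)$ and the upper because $\alp\in\grM(Q)$; hence $\ftil_2(\alp)^2\ll X/Q$ throughout $\grN(Q)$. Together with Lemma \ref{lemma5.1} this gives $\mathcal{I}_1\ll (X/Q)X^{2/3}=X^{5/3}/Q$.

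The substance of the argument lies in bounding $\mathcal{I}_2$, and here I would invoke the pruning Lemma \ref{lemma5.3} rather than a pointwise estimate, since the latter loses a factor $X/Q$ that is fatal for small $Q$. I would take $\Psi(\alp)=|f_3(\alp)K(\alp)|^2$, which is non-negative with a finite Fourier expansion of length $H\ll X$ (so that $\log H\ll \log N$); its zeroth coefficient is $\psi_0=\int_0^1|f_3^2K^2|\d\alp\ll X^{1/3}Z+X^\eps Z^2$ by Lemma \ref{lemma5.2}, while crudely counting the underlying quadruples $(x,y,m,n)$ gives $\sum_{h\ne0}|\psi_h|\le P_3^2Z^2\le X^{2/3}Z^2$. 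To supply the extra decay the lemma demands, I would exploit the observation above that $q+P_2^2|q\alp-a|\ll Q$ on $\grN(Q)$: since $\ftil_2^2=P_2^2(q+P_2^2|q\alp-a|)^{-1}\ll XQ^\del(q+P_2^2|q\alp-a|)^{-1-\del}$, the amplitude $\ftil_2^2$ satisfies the hypothesis of Lemma \ref{lemma5.3} with $N=P_2^2$ and $\gam=\del$ up to the harmless prefactor $XQ^\del$. Integrating over all of $\grM(Q)$ by positivity, the lemma then yields
$$\mathcal{I}_2\ll Q^\del\Bigl(Q^{1-\del}\psi_0+X^\eps\sum_{h\ne0}|\psi_h|\Bigr)\ll QX^{1/3}Z+QX^\eps Z^2+Q^\del X^{2/3+\eps}Z^2.$$

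Finally I would multiply the two estimates and simplify using $Q\le X^{1/2}$, obtaining $T_1(Q)^2=\mathcal{I}_1\mathcal{I}_2\ll X^2Z+X^{5/3+\eps}Z^2+Q^{\del-1}X^{7/3+\eps}Z^2$. The middle term is absorbed into the last because $Q^{1-2\del}\le Q\le X^{1/2}<X^{2/3}$, and $Q^{\del-1}\le Q^{2\del-1}$ since $Q\ge1$, so that $T_1(Q)^2\ll X^2Z+X^{7/3+\eps}Q^{2\del-1}Z^2$ and the claimed bound follows on taking square roots. The delicate point—the one I expect to be the main obstacle—is the decision to prune the $K$-bearing factor $\mathcal{I}_2$ while treating $\mathcal{I}_1$ trivially: pruning both factors would produce growth in $Q$ of the wrong sign, whereas pruning the $f_6$-bearing factor $\mathcal{I}_1$ instead leaves an unacceptable $Z$-coefficient of size $Q^{\del-1}X^{8/3}$ at the low end $Q=1$. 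Verifying the hypotheses of Lemma \ref{lemma5.3} for $\Psi=|f_3K|^2$, and in particular extracting the decay exponent $1+\del$ from $\ftil_2^2$ via the relation $q+P_2^2|q\alp-a|\asymp Q$, is the only genuinely fiddly step.
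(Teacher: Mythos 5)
Your proof is correct and follows essentially the same route as the paper: a Cauchy--Schwarz split of $T_1(Q)$ into the two mean values prepared in Lemmas \ref{lemma5.1} and \ref{lemma5.2}, with the pruning Lemma \ref{lemma5.3} applied to the $K$-bearing factor and the bound $\ftil_2(\alp)^2\ll X/Q$ on $\grN(Q)$ disposing of the other. The only difference is cosmetic, namely where the $\gam$-decay required by Lemma \ref{lemma5.3} is generated: the paper uses H\"older to write $\ftil_2^2=\ftil_2^{1-\gam}\cdot \ftil_2^{1+\gam}$, taking the supremum of $\ftil_2^{1-\gam}$ over $\grN(Q)$ and leaving the weight $\ftil_2^{2+2\gam}$ inside the $K$-integral, whereas you keep $\ftil_2^2$ in both factors and trade a factor $Q^\del$ via the upper bound $q+P_2^2|q\alp-a|\ll Q$ on $\grM(Q)$ --- the resulting estimates coincide.
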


\begin{proof} Let $\gam$ be a small positive number. Then by applying H\"older's inequality to (\ref{6.2}), one obtains the bound
\begin{align}
T_1(Q)\le &\,\Bigl(\sup_{\alp \in \grN(Q)}\ftil_2(\alp)\Bigr)^{1-\gam}\Bigl( \int_0^1 |f_3(\alp)^2f_6(\alp)^4|\d\alp \Bigr)^{1/2}\notag \\
&\times \Bigl( \int_{\grN(Q)}\ftil_2(\alp)^{2+2\gam}|f_3(\alp)^2K(\alp)^2|\d\alp \Bigr)^{1/2}.\label{6.5}
\end{align}
Making use of (\ref{6.1}) and applying Lemma \ref{lemma5.3}, we see that
\begin{align*}
\int_{\grN(Q)}&\ftil_2(\alp)^{2+2\gam}|f_3(\alp)^2K(\alp)^2|\d\alp \\
&\ll P_2^{2+2\gam}X^{-1}\Bigl( Q^{1-\gam}\int_0^1|f_3(\alp)^2K(\alp)^2|\d\alp +X^\eps f_3(0)^2K^*(0)^2\Bigr) .
\end{align*}
Then as a consequence of Lemma \ref{lemma5.2}, one finds that
\begin{align*}
\int_{\grN(Q)}\ftil_2(\alp)^{2+2\gam}&|f_3(\alp)^2K(\alp)^2|\d\alp \\
&\ll P_2^{2+2\gam}X^{-1}\left( Q^{1-\gam}(P_3Z+P_3^\eps Z^2)+P_3^{2+\eps}Z^2\right) \\
&\ll P_2^{2\gam}(P_3Q^{1-\gam}Z+P_3^{2+\eps}Z^2).
\end{align*}
On recalling (\ref{6.1}) and Lemma \ref{lemma5.1}, we therefore conclude from (\ref{6.5}) that
\begin{align*}
T_1(Q)&\ll (P_2Q^{-1/2})^{1-\gam}(X^{2/3})^{1/2}P_2^\gam (P_3Q^{1-\gam}Z+P_3^{2+\eps}Z^2)^{1/2}\\
&\ll P_2X^{1/3}P_3^{1/2}Z^{1/2}+P_2X^{1/3+\eps}P_3Q^{(\gam-1)/2}Z.
\end{align*}
The conclusion of the lemma follows on recalling that $P_k=X^{1/k}$.
\end{proof}

\begin{lemma}\label{lemma6.2}
When $1\le Q\le X^{1/2}$, one has
$$T_2(Q)\ll X^{5/6+\eps}Q^{1/3}Z+X^{1+\eps}Q^{-2/3}Z.$$
\end{lemma}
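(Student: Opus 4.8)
The plan is to exploit the explicit shape of $f_3^*$ to bound the integrand by a clean major-arc majorant, and then to reduce to a mean value that is free of $K$ by means of the trivial pointwise bound $|K(\alp)|\le Z$. The point of working with $f_3^*$ rather than $f_3$ (as in Lemma \ref{lemma6.1}) is precisely that $f_3^*$ admits such a pointwise description on the arcs.

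First I would record pointwise majorants on each arc $\grM(q,a)\subseteq\grM(Q)$. From (\ref{6.1}) and $P_2^2\asymp X$ one has $\ftil_2(\alp)^2\ll X(q+X|q\alp-a|)^{-1}$. For $f_3^*$, I would combine the bound $q^{-1}|S_3(q,a)|\ll w_3(q)\ll q^{-1/3+\eps}$ from \cite[Lemma 3]{Vau1986} with $v_3(\bet)\ll P_3(1+X|\bet|)^{-1/3}$ from \cite[Theorem 7.3]{Vau1997}; writing $\bet=\alp-a/q$ and using the identity $1+X|\bet|=(q+X|q\alp-a|)/q$, the powers of $q$ cancel and one obtains the uniform bound $|f_3^*(\alp)|^2\ll X^{2/3+\eps}(q+X|q\alp-a|)^{-2/3}$. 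Multiplying, the weight $G(\alp)=\ftil_2(\alp)^2|f_3^*(\alp)|^2$ satisfies
$$G(\alp)\ll X^{5/3+\eps}(q+X|q\alp-a|)^{-5/3}\qquad(\alp\in\grM(q,a)\subseteq\grM(Q)).$$
Since $|\eta_n|=1$ for each $n$, one has $|K(\alp)|\le Z$ for all $\alp$, whence
$$T_2(Q)\le Z\int_{\grN(Q)}G(\alp)|f_6(\alp)|^2\,\d\alp,$$
and it suffices to show that the $K$-free integral on the right is $\ll X^{5/6+\eps}Q^{1/3}+X^{1+\eps}Q^{-2/3}$.

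Here I would apply Lemma \ref{lemma5.3} with $\Psi=|f_6|^2$, whose Fourier expansion has $\psi_0=\int_0^1|f_6|^2\,\d\alp=P_6$, $\sum_{h\ne0}|\psi_h|\le P_6^2$, and $H\le P_6^6\le X$, so that $\log H\ll\log X$. The crucial step is to take $\calM(q,a)=\grN(Q)\cap\grM(q,a)$: on this annular set one has $q+X|q\alp-a|\gg Q$, so for a fixed small $\gam>0$ (with $\theta=\tfrac23-\gam$) the majorant may be rewritten as
$$G(\alp)\ll X^{5/3+\eps}Q^{-\theta}(q+X|q\alp-a|)^{-1-\gam}\qquad(\alp\in\calM(q,a)).$$
Lemma \ref{lemma5.3} then yields
$$\int_{\grN(Q)}G|f_6|^2\,\d\alp\ll X^{5/3+\eps}Q^{-\theta}X^{-1}\bigl(Q^{1-\gam}P_6+X^\eps P_6^2\bigr)=X^{5/6+\eps}Q^{1/3}+X^{1+\eps}Q^{-2/3+\gam},$$
the powers of $Q$ in the first term cancelling independently of $\theta$; since $Q\le X^{1/2}$ the residual factor $Q^{\gam}$ is absorbed into $X^\eps$, and multiplication by $Z$ gives the claim.

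The main obstacle is the second term: a direct appeal to Lemma \ref{lemma5.3} over the full arcs $\grM(Q)$ produces only $X^{1+\eps}$, losing the factor $Q^{-2/3}$. Recovering it forces one to restrict the pruning to the annulus $\grN(Q)$ and to extract the saving $Q^{-\theta}$ from the lower bound $q+X|q\alp-a|\gg Q$ valid there; one must check that Lemma \ref{lemma5.3} genuinely applies to the sub-intervals $\grN(Q)\cap\grM(q,a)$ (splitting each annulus into its two component intervals when $q\le Q/2$), which it does since the bound $\rho_h\ll N^{-1}$ in its proof holds for any sub-interval. A secondary point worth flagging is that the seemingly crude bound $|K(\alp)|\le Z$ is not wasteful here: because the target is linear in $Z$ and the arc integral is estimated sharply, it is in fact the correct device, whereas replacing $|K|$ by $|K|^2$ through Cauchy--Schwarz degrades the estimate, the off-diagonal contribution to $\int_0^1|f_6|^2|K|^2\,\d\alp$ introducing a term of size $Z^2$ that spoils the final bound.
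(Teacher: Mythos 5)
Your proposal is correct and takes essentially the same route as the paper: the paper likewise extracts the $Q^{-2/3}$ saving from the major-arc bound for $f_3^*$ on the annulus $\grN(Q)$ (in the form $\sup_{\alp\in\grN(Q)}|f_3^*(\alp)|\ll P_3Q^{-1/3}$), bounds $|K(\alp)|$ trivially by $K^*(0)=Z$, and prunes the remaining integral $\int_{\grN(Q)}\ftil_2(\alp)^2|f_6(\alp)|^2\d\alp$ against $\Psi=|f_6|^2$ with $\psi_0=P_6$ and $\sum_{h\neq 0}|\psi_h|\le P_6^2$. The only difference is organizational: the paper pulls the supremum of $|f_3^*|^2$ out of the integral before pruning and cites Br\"udern's original lemma \cite[Lemma 2]{Bru1988} (which sidesteps the sub-interval point you flag, since the integrand is nonnegative and the weight bound for $\ftil_2^2$ holds on all of $\grM(Q)$), whereas you carry $|f_3^*|^2$ inside the pruning weight and invoke Lemma \ref{lemma5.3}.
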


\begin{proof} On making use of the definition (\ref{6.3}), we find from \cite[Theorems 4.2 and 7.3]{Vau1997} that when $\alp \in \grM(q,a)\subseteq \grM(Q)$ one has
$$f_3^*(\alp)\ll P_3(q+P_3^3|q\alp-a|)^{-1/3},$$
whence
$$\sup_{\alp\in \grN(Q)}|f_3^*(\alp)|\ll P_3Q^{-1/3}.$$
On recalling (\ref{6.1}), we therefore deduce from \cite[Lemma 2]{Bru1988} and (\ref{6.4}) that whenever $\gam$ is a sufficiently small positive number, one has
\begin{align*}
T_2(Q)&\ll ( \sup_{\alp \in \grN(Q)}|f_3^*(\alp)|)^2K^*(0)\int_{\grN(Q)}\ftil_2(\alp)^2|f_6(\alp)|^2\d\alp \\
&\ll X^\eps P_3^2Q^{-2/3}Z\Bigl( Q\int_0^1|f_6(\alp)|^2\d\alp +P_6^2\Bigr)\\
&\ll X^\eps P_3^2Q^{-2/3}Z(QP_6+P_6^2).
\end{align*}
The conclusion of the lemma now follows. 
\end{proof}

Write
\begin{equation}\label{6.6}
T_0(Q)=\int_{\grN(Q)}|f_2(\alp)^2f_3(\alp)^2f_6(\alp)^2K(\alp)|\d\alp .
\end{equation}
According to \cite[Theorem 4]{Vau2006}, when $\alp \in \grM(q,a)\subseteq \grM(Q)$ and $1\le Q\le 2P_2$, one has
\begin{align*}
f_2(\alp)&\ll P_2(q+P_2^2|q\alp-a|)^{-1/2}+(q+P_2^2|q\alp-a|)^{1/2}\\
&\ll P_2(q+P_2^2|q\alp-a|)^{-1/2}.
\end{align*}
Consequently, when $\alp\in \grN(Q)$ and $1\le Q\le X^{1/2}$, one has
\begin{equation}\label{6.7}
f_2(\alp)\ll \ftil_2(\alp).
\end{equation}
It therefore follows from (\ref{6.6}) and Lemma \ref{lemma6.1} that whenever $X^{1/3+\del}\le Q\le X^{1/2}$, one has
\begin{align}
T_0(Q)&\ll \int_{\grN(Q)}\ftil_2(\alp)^2|f_3(\alp)^2f_6(\alp)^2K(\alp)|\d\alp \notag \\
&\ll XZ^{1/2}+X^{7/6+\eps}Q^{\del-1/2}Z\notag \\
&\ll XZ^{1/2}+X^{1-\del^2}Z.\label{6.8}
\end{align}

\par Next, from \cite[Theorem 4.1]{Vau1997}, one finds that when $\alp\in \grN(Q)$ one has
$$f_3(\alp)-f_3^*(\alp)\ll Q^{1/2+\eps},$$
whence
\begin{align*}
\int_{\grN(Q)}\ftil_2(\alp)^2|f_3(\alp)^2f_6(\alp)^2K(\alp)|\d\alp \ll &\, \int_{\grN(Q)}\ftil_2(\alp)^2|f_3^*(\alp)^2f_6(\alp)^2K(\alp)|\d\alp \\
&\, +Q^{1+\eps}K^*(0)\int_{\grN(Q)}\ftil_2(\alp)^2|f_6(\alp)|^2\d\alp .
\end{align*}
On recalling (\ref{6.1}), (\ref{6.4}) and \cite[Lemma 2]{Bru1988}, therefore, we see as in the proof of Lemma \ref{lemma6.2} that
$$\int_{\grN(Q)}\ftil_2(\alp)^2|f_3(\alp)^2f_6(\alp)^2K(\alp)|\d\alp \ll T_2(Q)+X^\eps QZ(QP_6+P_6^2).$$
Then on making use of Lemma \ref{lemma6.2}, we conclude from (\ref{6.7}) that whenever $1\le Q\le X^{1/2}$, one has
\begin{align*}
T_0(Q)&\ll \int_{\grN(Q)}\ftil_2(\alp)^2|f_3(\alp)^2f_6(\alp)^2K(\alp)|\d\alp \\
&\ll X^\eps Z(Q^2X^{1/6}+QX^{1/3}+Q^{1/3}X^{5/6}+Q^{-2/3}X).
\end{align*}
Consequently, when $X^\nu \le Q\le X^{1/3+\del}$ one deduces that
\begin{equation}\label{6.9}
T_0(Q)\ll X^\eps Z(X^{17/18+\del}+X^{1-2\nu/3}).
\end{equation}

\par We now cover $\grp$ by dyadic sets of arcs $\grN(Q)$ with $Q=X^\nu, 2X^\nu,\ldots $ and $Q\le X^{1/2}$, as is possible as a consequence of Dirichlet's theorem on Diophantine approximation. Thus we see from (\ref{6.8}) and (\ref{6.9}) that
$$\int_\grp |f_2(\alp)^2f_3(\alp)^2f_6(\alp)^2K(\alp)|\d\alp\ll L(XZ^{1/2}+X^{1-\del^2}Z+X^{1-\nu/2}Z).
$$
On substituting into (\ref{4.8}), we therefore conclude that
$$ZX\psi(X)^{-1}\ll LXZ^{1/2}+X^{1-\del^3}Z,$$
whence $Z\ll L^2\psi(X)^2$. Summing over dyadic intervals to cover the set of integers $[1,X]\cap \dbZ$, we therefore arrive at the estimate
$$E(X;\psi)\le \sum_{2^j\le X}\text{card}(\calZ(2^{j+1}))\ll L^3\psi(X)^2.$$
This completes the proof of Theorem \ref{theorem1.1}.

\bibliographystyle{amsbracket}

\begin{thebibliography}{18}

\bibitem{Bru1988}
J. Br\"udern, \emph{A problem in additive number theory}, Math. Proc. Cambridge Philos. Soc. \textbf{103} (1988), 27--33.

\bibitem{Dav1963}
H. Davenport, \emph{Analytic methods for Diophantine equations and Diophantine inequalities}, Second edition, Cambridge University Press, Cambridge, 2005.

\bibitem{FW2012}
J. B. Friedlander and T. D. Wooley, \emph{On Waring's problem: two squares and three biquadrates}, submitted, 13pp, preprint available as arxiv:1211.1823.

\bibitem{Gau1801}
C. F. Gauss, \emph{Disquisitiones arithmeticae}, Leipzig, 1801.

\bibitem{Gol2008}
E. P. Golubeva, \emph{A bound for the representability of large numbers by ternary forms, and nonhomogeneous Waring equations}, Zap. Nauchn. Sem. S.-Peterburg. Otdel. Mat. Inst. Steklov. (POMI) \textbf{357} (2008), Analiticheskaya Teoriya Chisel i Teoriya Funktsii. \textbf{23}, 5--21, 224; translation in J. Math. Sci. (New York) \textbf{157} (2009), 543--552.

\bibitem{Gre1994}
G. Greaves, \emph{Representation of a number by the sum of two fourth powers}, Mat. Zametki \textbf{55} (1994), 47--58.

\bibitem{HB1997}
D. R. Heath-Brown, \emph{The density of rational points on cubic surfaces}, Acta Arith. \textbf{79} (1997), 17--30.

\bibitem{HB2002}
D. R. Heath-Brown, \emph{The density of rational points on curves and surfaces}, Ann. of Math. (2) \textbf{155} (2002), 553--595.

\bibitem{Hoo1981a}
C. Hooley, {\it On a new approach to various problems of Waring's type}, Recent progress in analytic number theory, Vol. 1 (Durham, 1979), pp. 127--191, Academic Press, London-New York, 1981. 

\bibitem{Hoo1981b}
C. Hooley, \emph{On Waring's problem for two squares and three cubes}, J. Reine Angew. Math. \textbf{328} (1981), 161--207.

\bibitem{Hoo1996}
C. Hooley, \emph{On another sieve method and the numbers that are a sum of two $h$th powers, II}, J. Reine Angew. Math. \textbf{475} (1996), 55--75.

\bibitem{SW1995}
C. M. Skinner and T. D. Wooley, \emph{Sums of two $k$th powers}, J. Reine Angew. Math. \textbf{462} (1995), 57--68.

\bibitem{Vau1980}
R. C. Vaughan, \emph{A ternary additive problem}, Proc. London Math. Soc. (3) \textbf{41} (1980), 516--532. 

\bibitem{Vau1986}
R. C. Vaughan, \emph{On Waring's problem for smaller exponents}, Proc. London Math. Soc. (3) \textbf{52} (1986), 445--463.

\bibitem{Vau1997}
R. C. Vaughan, \emph{The Hardy-Littlewood method}, 2nd edition, Cambridge University Press, Cambridge, 1997.

\bibitem{Vau2006}
R. C. Vaughan, \emph{On generating functions in additive number theory, I}, Analytic number theory, pp. 436--448, Cambridge Univ. Press, Cambridge, 2009.

\bibitem{Woo2002b}
T. D. Wooley, \emph{Slim exceptional sets for sums of cubes}, Canad. J. Math. \textbf{54} (2002), 417--448.

\bibitem{Woo2002a}
T. D. Wooley, \emph{Slim exceptional sets in Waring's problem: one square and five cubes}, Quart. J. Math. Oxford \textbf{53} (2002), 111--118.

\bibitem{Woo2003}
T. D. Wooley, \emph{Slim exceptional sets and the asymptotic formula in Waring's problem}, Math. Proc. Cambridge Philos. Soc. \textbf{134} (2003), 193--206.

\bibitem{Woo2012}
T. D. Wooley, \emph{On Waring's problem: some consequences of Golubeva's method}, submitted.

\end{thebibliography}
\providecommand{\bysame}{\leavevmode\hbox to3em{\hrulefill}\thinspace}

\end{document}